\documentclass[11pt, letterpaper]{amsart}

\usepackage{graphicx}
\usepackage{booktabs} 
\usepackage[figuresright]{rotating}

\setlength{\textwidth}{5 in}
\setlength{\topmargin}{-0in}
\setlength{\textheight}{8 in}

\newtheorem{theorem}{\noindent{\bf Theorem}}[section]
\newtheorem{definition}[]{\noindent{\bf Definition}}[section]
\newtheorem{proposition}[]{\noindent{\bf Proposition}}[section]

\newtheorem{lemma}[]{\noindent{\bf Lemma}}[section]
\newtheorem{corollary}[]{\noindent{\bf Corollary}}[section]
\newtheorem{example}[]{\noindent{\bf Example}}[section]
\newtheorem{remark}[]{\noindent{\bf Remark}}[section]


\begin{document}

\title[Existence of Branched covers $S^{2}\rightarrow S^{2}$ with prescribed branching data]{Existence of Branched covers $S^{2}\rightarrow S^{2}$ with prescribed branching data}
\author {Yingjie Meng,~~Zhiqiang Wei,~~ Chuankai Zhou}


\begin{abstract}
Building on techniques from complex analysis and topology, we establish a remarkable property of branched covers and formulate a complete criterion for the existence of specific types of branched covers between 2-spheres. Our results extend and unify previous work by Jiang (2004), Pervova-Petronio (2006), Zhu (2019), and Wei-Wu-Xu (2024). As applications of our criterion, we present several new families of exceptional branching data.
\vspace*{2mm}

\noindent{\bf Key words}\hskip3mm  Branched cover, branching data, the Hurwitz existence problem.
\vspace*{2mm}\\
\noindent{\bf 2020 MR Subject Classification:}\hskip3mm 57M12.
\thispagestyle{empty}

\end{abstract}
\maketitle

\section{Introduction}
\setcounter{equation}{0}
The study of branched covers between compact, connected, orientable surfaces with prescribed branching data represents a fundamental problem in complex analysis and low-dimensional topology, with its origins tracing back to Hurwitz's foundational work \cite{Hur91}. This classical problem, known as the Hurwitz existence problem, continues to play a central role in geometric function theory and surface topology.

In his seminal 1891 paper, Hurwitz \cite{Hur91} established a remarkable correspondence between the existence of branched covers and factorizations of permutations in symmetric groups. While theoretically elegant, this algebraic characterization presents significant computational challenges in practical applications. A major breakthrough was achieved by Edmonds, Kulkarni, and Stong \cite{EKS84}, who provided a complete solution for the case when the target surface $X$ satisfies $\chi(X)\leq 0$. However, the special case of $X=S^2$ (i.e. the Riemann sphere) exhibits unexpected complexity-for example, the branching data $\mathcal{D}=\{[3,1],[2,2],[2,2]\}$ proves to be unrealizable. Their work led to the formulation of the influential Prime Degree Conjecture, which has received substantial supporting evidence through subsequent research by Pakovich \cite{Pak09} and Pascali-Petronio \cite{PP09,PP12}, yet remains unresolved to this day.

The literature reveals several distinct approaches to this problem. Bara\'{n}ski \cite{Bar01} pioneered the use of graph-theoretic methods on surfaces to characterize realizable branching data for $S^2$. Jiang \cite{JY04} employed differential topological techniques to establish necessary and sufficient conditions for certain classes of branching data. Zheng's computational work \cite{Zhe06} systematically classified exceptional data for low-degree covers ($d\leq22$) between $S^2$ and $S^2$ with three branch points. Pervova and Petronio \cite{PP06} combined techniques from dessins d'enfants, decomposability theory, and surface graphs to produce novel examples of both realizable and exceptional data. Most recently, Wei, Wu, and Xu \cite{WWX24}, using the decomposability approach, found new properties of  branched covers and discovered additional exceptional cases with three branch points between $S^2$ and $S^2$. Pakovich's latest contribution \cite{FP24} leverages sophisticated methods from holomorphic dynamics and fiber product constructions to generate extensive families of new examples. For comprehensive surveys of related results, we direct readers to \cite{Bo82,CH22,EKS84,AD84,PP08,P20,SX20,Th65,Zhu19} and references therein.

Despite these significant advances, a complete classification of realizable branching data remains an open challenge. In this work, we focus on branched covers $f\colon S^2\to S^2$ with at least three branch points, combining Sto\"{i}low's theorem with differential topological methods to establish a crucial structural property (Proposition \ref{main-p1}) and some existence results. Our main results are summarized as follows:

\begin{proposition}\label{main-p1}
Let $s\geq2$, $t\geq1$, and $d'\geq1$ be integers, and consider $n\geq3$ nontrivial partitions of $sd'$ given by:
\begin{align*}
A_1 &= [sa_{11},sa_{12},\ldots,sa_{1r_1}], \\
A_2 &= [sa_{21},sa_{22},\ldots,sa_{2r_2}], \\
A_3 &= [ta_{31},ta_{32},\ldots,ta_{3r_3}], \\
A_k &= [a_{n1},a_{n2},\ldots,a_{nr_n}]~~(4\leq k\leq n),
\end{align*}
where all $a_{kl}\geq1$ are integers. If there exists a branched cover $f:S^{2}\rightarrow S^{2}$ with branching data $\mathcal{D}=\{A_1,A_2,\ldots,A_n\}$, then the following holds:
\begin{enumerate}
\item For $s\geq4$, necessarily $t=1$.
\item For $s=3$, $t\in\{1,2\}$. Moreover, when $t=2$, we have $4\mid d'$.
\item For $s=2$, $t\geq1$. Furthermore, if $t\geq2$, then $t$ divides $d'$.
\end{enumerate}
\end{proposition}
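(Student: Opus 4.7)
The plan is to combine Sto\"{i}low's theorem with the classical fact that every degree-zero divisor on $S^2$ is principal, in order to factorize $f$ explicitly and reduce the problem to a smaller Hurwitz instance for an auxiliary cover $g$. The whole argument will then close by induction on the degree $sd'$.

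By Sto\"{i}low's theorem, after a homeomorphism of the source we may assume $f$ is a rational function on $S^2$; choose coordinates on the target so that $p_1 = 0$ and $p_2 = \infty$. The hypothesis that every part of $A_1$ and $A_2$ is a multiple of $s$ says precisely that $\mathrm{div}(f) = s D$ on the source, for some divisor $D$ of degree zero. Since every degree-zero divisor on $S^2$ is principal, $D = \mathrm{div}(g)$ for a rational function $g$ of degree $d'$; then $f/g^s$ has trivial divisor and is therefore constant, so after rescaling $g$ we obtain a factorization $f = \pi_s \circ g$, where $\pi_s \colon z \mapsto z^s$. Away from $\{0,\infty\}$ the local multiplicities of $g$ and of $f = g^s$ at a common point coincide, so the branch points of $g$ lie in $\{0,\infty\}$ together with the $s$ preimages of each $p_k$ ($k \geq 3$) under $\pi_s$, and for each such $k$ the partitions of $d'$ describing $g$ over these $s$ preimages combine as a multiset into $A_k$. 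In particular, each of the $s$ partitions of $d'$ describing $g$ over the preimages of $p_3$ has all parts divisible by $t$.

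Apply Riemann--Hurwitz to $g$: the total ramification defect equals $2d' - 2$. If $t \geq 2$, each of the $s$ partitions over the preimages of $p_3$ has at most $d'/t$ parts and thus contributes defect at least $d'(t-1)/t$, so
\[
\frac{sd'(t-1)}{t} \;\leq\; 2d' - 2 \;<\; 2d',
\]
which rearranges to $t(s-2) < s$. For $s \geq 4$ this forces $t = 1$; for $s = 3$ it forces $t \in \{1,2\}$; for $s = 2$ it is vacuous. This establishes the first clauses of (1), (2), and (3). The refined divisibility claims now follow by examining $g$ more closely. If $s = 2$ and $t \geq 2$, each of the two partitions of $d'$ describing $g$ over the preimages of $p_3$ has all parts divisible by $t$, so immediately $t \mid d'$. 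If $s = 3$ and $t = 2$, each of the three partitions of $d'$ describing $g$ over the preimages of $p_3$ is nontrivial (every part is at least $2$), so $g$ has at least three branch points, and these three partitions together with $g$'s remaining branching data form valid input for case (3) of the proposition applied to $g$ itself with new parameters $s' = 2$, $t' = 2$, and $d' = 2 d''$; the conclusion $t' \mid d''$ translates to $4 \mid d'$.

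The main technical hurdle is the factorization $f = \pi_s \circ g$: this is where Sto\"{i}low's theorem, the triviality of the degree-zero Picard group of $S^2$, and the divisibility hypotheses on $A_1, A_2$ are all assembled simultaneously. Once the factorization is in hand, the remaining analysis is a single Riemann--Hurwitz count on $g$ plus, for the borderline case $s = 3$, $t = 2$, one reapplication of the proposition to an instance of strictly smaller degree, so the induction closes.
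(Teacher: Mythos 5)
Your proof is correct, and it overlaps with the paper's argument in spirit while differing in two instructive places. For the constraints on $(s,t)$, the paper works directly on $f$: from $\sum_k r_k = s(n-2)d'+2$ together with the length bounds $r_1,r_2\leq d'$, $r_3\leq sd'/t$, $r_k\leq sd'-1$ it extracts $0 < n-1 \leq (2+\tfrac{s}{t}-s)d'$, which is exactly your inequality $t(s-2)<s$ in disguise; you instead first build the factorization $f=\pi_s\circ g$ and then run Riemann--Hurwitz on $g$ over the $s$ preimages of $p_3$. The paper's count is more elementary at this stage (no factorization needed), but your route has a real advantage: the factorization, which both proofs need anyway for the divisibility refinements, is established explicitly via the principal-divisor argument on $\mathbb{P}^1$ ($\mathrm{div}(f)=sD$, $D=\mathrm{div}(g)$, $f/g^s$ constant), whereas the paper only asserts that ``the divisibility condition implies the existence of a degree $d'$ rational map,'' implicitly leaning on Theorem \ref{main-th1} (whose necessity is in turn attributed to Proposition \ref{main-p1}); your version removes that apparent circularity. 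For the refinements, your observation in the case $s=2$, $t\geq2$ --- that each fiber of $g$ over a preimage of $p_3$ is a partition of $d'$ with all parts divisible by $t$, whence $t\mid d'$ immediately --- is more direct than the paper's second factorization; in the case $s=3$, $t=2$ both arguments perform a further round of the same factorization, yours packaged as an application of the already-proved case (3) to $g$ (your check that the three even partitions are nontrivial, so that $g$ genuinely has $n\geq3$ branch points, is the right point to verify there). I see no gap.
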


\begin{theorem}\label{main-th1}
Let $s\geq2$ and $d'\geq1$ be integers. Consider $n\geq3$ nontrivial partitions of $sd'$:
\begin{align*}
A_1 &= [sa_{11},\ldots,sa_{1r_{1}}], \\
A_2 &= [sa_{21},\ldots,sa_{2r_{2}}], \\
A_k &= [a_{k1},\ldots,a_{kr_k}] \quad (3\leq k\leq n),
\end{align*}
satisfying the Riemann-Hurwitz condition $\sum\limits_{k=1}^{n}r_k = s(n-2)d'+2$. Then the following are equivalent:
\begin{enumerate}
\item There exists a degree $sd'$ branched cover $f:S^2\to S^2$ with branching data $\mathcal{D}=\{A_1,\ldots,A_n\}$;
\item There exists a degree $d'$ branched cover $g:S^2\to S^2$ whose branching data contains $[a_{11},\ldots,a_{1r_1}]$ and $[a_{21},\ldots,a_{2r_2}]$, with the remaining data obtained by decomposing each $A_k$ ($k\geq3$) into $s$ partitions of $d'$.
\end{enumerate}
\end{theorem}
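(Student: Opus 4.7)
The plan is to prove the equivalence $(1) \iff (2)$ by establishing the two implications separately: $(2) \Rightarrow (1)$ via a direct composition with the totally ramified power map $w \mapsto w^{s}$, and $(1) \Rightarrow (2)$ by passing to a holomorphic model via Sto\"{i}low's theorem and then extracting a global $s$-th root of $f$.

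For $(2) \Rightarrow (1)$, assume such a degree-$d'$ cover $g: S^{2} \to S^{2}$ exists. I take the degree-$s$ map $h: S^{2} \to S^{2}$ given by $w \mapsto w^{s}$, which is totally ramified of type $[s]$ over $0$ and $\infty$. One can realize $g$ with its branch values placed as follows: the partitions $[a_{11},\ldots,a_{1r_1}]$ and $[a_{21},\ldots,a_{2r_2}]$ at $0$ and $\infty$ respectively, and for each $k \geq 3$ the $s$ partitions of the prescribed decomposition of $A_k$ at the $s$ distinct $s$-th roots of some point $p_k \in \mathbb{CP}^{1} \setminus \{0, \infty\}$ (using that the branch values of a branched cover may be freely moved by a self-homeomorphism of the target). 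Setting $f := h \circ g$ gives a degree-$sd'$ branched cover, and a direct local-degree computation verifies that its branching data is $\{A_1, \ldots, A_n\}$: at $p_1$ and $p_2$ the local degrees of $g$ are multiplied by $s$ under $h$, yielding $A_1$ and $A_2$; over each $p_k$ ($k \geq 3$) the $s$ unramified $h$-preimages contribute the concatenation of the chosen decomposition, which is $A_k$.

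For the harder direction $(1) \Rightarrow (2)$, assume $f$ exists. By Sto\"{i}low's theorem, $f$ is topologically equivalent to a holomorphic branched cover of the Riemann sphere, so I replace $f$ by a rational function on $\mathbb{CP}^{1}$ (this preserves branching data, being a topological invariant). After a M\"obius change of coordinates on the target I arrange $p_1 = 0$ and $p_2 = \infty$. The divisor of $f$ then equals $s \cdot D$, where $D = \sum_{l} a_{1l} [x_{1l}] - \sum_{l} a_{2l} [y_{2l}]$ is supported at the zeros $x_{1l}$ and poles $y_{2l}$ of $f$, with integer multiplicities $a_{1l}$ and $a_{2l}$ (the divisibility by $s$ being exactly the hypothesis on $A_1, A_2$). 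Since $\deg D = d' - d' = 0$ and $\mathrm{Pic}(\mathbb{CP}^{1}) = \mathbb{Z}$, the divisor $D$ is principal, so there is a rational function $g_0$ with $\mathrm{div}(g_0) = D$. Then $f / g_0^{s}$ has trivial divisor, hence is a nonzero constant $c \in \mathbb{C}^{*}$, and $g := c^{1/s} g_0$ satisfies $g^{s} = f$, i.e., $f = h \circ g$ with $h(w) = w^{s}$ and $\deg g = d'$.

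It remains to read off the branching data of $g$. The multiplicities of $g$ at $w = 0$ and $w = \infty$ are $a_{1l}$ and $a_{2l}$, giving the partitions $[a_{11}, \ldots, a_{1r_1}]$ and $[a_{21}, \ldots, a_{2r_2}]$. For $k \geq 3$, the preimage $h^{-1}(p_k)$ consists of the $s$ distinct $s$-th roots of $p_k$, each a regular value of $h$, so the local degrees of $f = h \circ g$ over $p_k$ agree with those of $g$ over $h^{-1}(p_k)$; concatenating the branching of $g$ over these $s$ points recovers $A_k$, which is exactly the prescribed decomposition in (2). The main obstacle throughout is justifying the $s$-th root construction: both Sto\"{i}low's theorem (to enter the holomorphic category where divisors and multiplicative operations are available) and the divisibility hypothesis on the multiplicities at $p_1, p_2$ (so that $\mathrm{div}(f)/s$ is an integral divisor) are essential, and without either a global single-valued $s$-th root of $f$ need not exist.
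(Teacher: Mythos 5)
Your proof is correct and follows essentially the same route as the paper: sufficiency by repositioning the branch values of $g$ (via Sto\"{i}low's theorem and the homogeneity of $S^2$) so that the two distinguished partitions sit over $0$ and $\infty$ and the $s$ pieces of each $A_k$ sit over the $s$-th roots of a point $p_k$, then composing with $w\mapsto w^s$; necessity by normalizing $f$ to a rational map with $A_1,A_2$ over $0,\infty$ and extracting a global $s$-th root. Your divisor-theoretic justification of that last step ($\operatorname{div}(f)=sD$ with $\deg D=0$, hence $D$ principal and $f=cg_0^s$) supplies the detail that the paper only asserts with the phrase ``the divisibility condition implies the existence of a degree $d'$ rational map,'' so your write-up is, if anything, more complete.
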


\begin{theorem}\label{main-th2}
Let $d'\geq2$ and $t\geq2$ be integers with $t\mid d'$. Consider $n\geq3$ nontrivial partitions of $2d'$:
\begin{align*}
A_1 &= [2a_{11},\ldots,2a_{1r_1}], \\
A_2 &= [2a_{21},\ldots,2a_{2r_2}], \\
A_3 &= [ta_{31},\ldots,ta_{3r_3}], \\
A_k &= [a_{k1},\ldots,a_{kr_k}] \quad (4\leq k\leq n),
\end{align*}
satisfying $\sum\limits_{k=1}^n r_k = 2(n-2)d'+2$. Then the following are equivalent:
\begin{enumerate}
\item There exists a degree $2d'$ branched cover $f:S^2\to S^2$ with branching data $\mathcal{D}=\{A_1,\ldots,A_n\}$;
\item There exists a degree $d'/t$ branched cover $g:S^2\to S^2$ whose branching data contains two partitions obtained from $[a_{31},\ldots, a_{3r_3}]$, along with data from:
\begin{itemize}
\item Decomposing each $[a_{1i}]$ and $[a_{2i}]$ into $t$ partitions of $d'/t$;
\item Decomposing each $A_k$ ($k\geq4$) into $2t$ partitions of $d'/t$.
\end{itemize}
\end{enumerate}
\end{theorem}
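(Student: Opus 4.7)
The plan is to obtain Theorem \ref{main-th2} as a two-step consequence of Theorem \ref{main-th1}: first apply Theorem \ref{main-th1} with $s=2$ to strip off the even partitions $A_1,A_2$ and descend from degree $2d'$ to degree $d'$, and then apply Theorem \ref{main-th1} again with $s=t$ to exploit the divisibility inherited from $A_3$ and descend further to degree $d'/t$.

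First I would verify that Theorem \ref{main-th1} applies with $s=2$ to the data $\mathcal{D}=\{A_1,A_2,A_3,\ldots,A_n\}$: the partitions $A_1,A_2$ have all entries divisible by $2$, and the Riemann--Hurwitz identity $\sum_{k=1}^{n}r_k=2(n-2)d'+2$ is exactly the one demanded in Theorem \ref{main-th1}. This yields an equivalence of statement (1) with the existence of a degree-$d'$ branched cover $g_1\colon S^2\to S^2$ whose branching data contains $[a_{11},\ldots,a_{1r_1}]$ and $[a_{21},\ldots,a_{2r_2}]$, together with some decomposition of each $A_k$ ($k\geq 3$) into two partitions of $d'$. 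Write the decomposition of $A_3=[ta_{31},\ldots,ta_{3r_3}]$ as $\{B_1,B_2\}$; because each entry of $A_3$ is divisible by $t$, both $B_1$ and $B_2$ automatically have all entries divisible by $t$.

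Next I would apply Theorem \ref{main-th1} a second time, now with $s=t$ and with intermediate degree parameter $d'/t$ (which is an integer by the hypothesis $t\mid d'$), using $B_1,B_2$ as the two distinguished partitions with $t$-divisible entries. One must check the Riemann--Hurwitz condition at the intermediate stage, but since $g_1$ is realizable it automatically satisfies its own Riemann--Hurwitz identity, which is precisely the condition needed here. The resulting equivalence produces a degree-$d'/t$ branched cover $g$ whose data consists of the two partitions obtained by dividing $B_1,B_2$ entrywise by $t$ (these are the ``two partitions obtained from $[a_{31},\ldots,a_{3r_3}]$''), a decomposition of each of $[a_{11},\ldots,a_{1r_1}]$ and $[a_{21},\ldots,a_{2r_2}]$ into $t$ partitions of $d'/t$, and, for each $k\geq 4$, a refinement of the two partitions of $d'$ coming from $A_k$ into $t$ partitions of $d'/t$ each, giving $2t$ partitions of $d'/t$ in total. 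This matches statement (2) verbatim, and transitivity of the two equivalences yields $(1)\Leftrightarrow (2)$.

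The direction $(2)\Rightarrow(1)$ follows from the same chain read in reverse: given the data in (2), one groups the $2t$ partitions arising from each $A_k$ ($k\geq 4$) arbitrarily into two blocks of $t$ (each block automatically summing to $d'$) to build the intermediate data at degree $d'$, uses the reverse implication of Theorem \ref{main-th1} with $s=t$ to produce $g_1$, and then the reverse implication with $s=2$ to recover the degree-$2d'$ cover. I expect the main obstacle to be purely combinatorial bookkeeping: tracking how the two successive decompositions compose, checking that the form prescribed in (2) agrees exactly with the output of the iterated reduction, and confirming that the Riemann--Hurwitz counts at both intermediate degrees are consistent. Once this matching is established, no new topological input beyond Theorem \ref{main-th1} is required.
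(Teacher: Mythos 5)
Your proposal is correct and follows essentially the same route as the paper: the paper's sufficiency argument composes the descended cover with the power maps $z\mapsto z^{t}$ and then $z\mapsto z^{2}$, which is exactly the constructive content of your two successive applications of Theorem \ref{main-th1} (first with $s=t$, then with $s=2$), and the necessity in both treatments rests on the same iterated divisibility descent recorded in Proposition \ref{main-p1}. Packaging the argument as a formal double application of Theorem \ref{main-th1} is just a cleaner organization of the same bookkeeping, and your checks of the intermediate Riemann--Hurwitz counts confirm it goes through.
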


\begin{theorem}\label{main-th3}
Let $d'\geq4$ with $4\mid d'$. Consider $n\geq3$ nontrivial partitions of $3d'$:
\begin{align*}
A_1 &= [3a_{11},\ldots,3a_{1r_1}], \\
A_2 &= [3a_{21},\ldots,3a_{2r_2}], \\
A_3 &= [2a_{31},\ldots,2a_{3r_3}], \\
A_k &= [a_{k1},\ldots,a_{kr_k}] \quad (4\leq k\leq n),
\end{align*}
satisfying $\sum\limits_{k=1}^n r_k = 3(n-2)d'+2$. Then the following are equivalent:
\begin{enumerate}
\item There exists a degree $3d'$ branched cover $f:S^2\to S^2$ with branching data $\mathcal{D}=\{A_1,\ldots,A_n\}$;
\item There exists a degree $d'/4$ branched cover $g:S^2\to S^2$ whose branching data contains six partitions obtained from $$[a_{31},\ldots,a_{3r_3}],$$
    along with data from:
\begin{itemize}
\item Decomposing each $[a_{1i}]$ and $[a_{2i}]$ into 4 partitions of $d'/4$;
\item Decomposing each $A_k$ ($k\geq4$) into 12 partitions of $d'/4$.
\end{itemize}
\end{enumerate}
\end{theorem}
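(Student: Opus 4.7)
The plan is to derive Theorem \ref{main-th3} as an iterated composition of Theorem \ref{main-th1} (applied with $s = 3$) and Theorem \ref{main-th2} (applied with $t = 2$), so that the hypothesis $4 \mid d'$ is exactly what makes both successive reductions legal (compare Proposition \ref{main-p1}(2)). The key structural observation is that when $A_3 = [2a_{31},\ldots,2a_{3r_3}]$ is decomposed into $s = 3$ sub-partitions of $d'$ by Theorem \ref{main-th1}, each of these sub-partitions still has all parts divisible by $2$, since a decomposition never splits individual parts; these three sub-partitions are exactly the input needed to apply Theorem \ref{main-th2} with $t = 2$.

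For the implication $(1) \Rightarrow (2)$, I first invoke Theorem \ref{main-th1} with $s = 3$ on the given degree-$3d'$ cover. The output is an intermediate degree-$d'$ cover whose branching data contains $B_1 := [a_{11},\ldots,a_{1r_1}]$, $B_2 := [a_{21},\ldots,a_{2r_2}]$, three partitions $C_1, C_2, C_3$ of $d'$ obtained from $A_3$ (each with all parts even), and, for each $k \geq 4$, three partitions $D_k^1, D_k^2, D_k^3$ of $d'$ obtained from $A_k$. I then apply Theorem \ref{main-th2} with $t = 2$ to this intermediate cover, designating $C_1, C_2, C_3$ as the three distinguished partitions and the remaining partitions $B_1, B_2, D_k^j$ as the others. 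The divisibility requirement $2 \mid d'/2$ of Theorem \ref{main-th2} is precisely $4 \mid d'$, and $d' \geq 4$ gives $d'/2 \geq 2$. The resulting degree-$d'/4$ cover has branching data combining: the $2 + 2 + 2 = 6$ sub-partitions of $[a_{31},\ldots,a_{3r_3}]$ obtained from the three halvings $[c_{i1},\ldots]$ (where $C_i = [2c_{i1},\ldots]$); $4$ sub-partitions of $d'/4$ from each of $B_1$ and $B_2$ (via the ``$A_k$ side'' of Theorem \ref{main-th2} with $2t = 4$); and $4$ sub-partitions of $d'/4$ from each $D_k^j$, giving $12$ per $A_k$ for $k \geq 4$. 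This matches condition $(2)$.

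For $(2) \Rightarrow (1)$, I reverse both reductions. Given a degree-$d'/4$ cover as in $(2)$, I group the six sub-partitions of $[a_{31},\ldots,a_{3r_3}]$ arbitrarily into three pairs and double each part to form three partitions $C_1, C_2, C_3$ of $d'$ with all parts even; for each $k \geq 4$ I group the twelve sub-partitions of $A_k$ arbitrarily into three groups of four to form three partitions $D_k^1, D_k^2, D_k^3$ of $d'$. The $(2) \Rightarrow (1)$ direction of Theorem \ref{main-th2} (with $t = 2$) then yields a degree-$d'$ cover with branching data $\{C_1, C_2, C_3, B_1, B_2\} \cup \{D_k^j\}$, and the $(2) \Rightarrow (1)$ direction of Theorem \ref{main-th1} (with $s = 3$) recombines the $C_i$'s into $A_3$ and the $D_k^j$'s into each $A_k$, producing the desired degree-$3d'$ cover with branching data $\mathcal{D}$.

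The main obstacle is bookkeeping: verifying that the forward composition lands exactly in the structural form prescribed by $(2)$, and that the arbitrary pairings and groupings used in the reverse direction always produce valid inputs for Theorems \ref{main-th1} and \ref{main-th2}. Both checks reduce to the observation that decomposition preserves divisibility and sum totals of parts, and the Riemann--Hurwitz identity $\sum_k r_k = 3(n-2)d' + 2$ is transported correctly under each step to yield the identity $(N - 2)(d'/4) + 2 = 3(n-2)d' + 2$ for the $N = 12n - 22$ partitions of the final degree-$d'/4$ cover.
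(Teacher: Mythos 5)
Your proposal is correct and is essentially the paper's own argument: the paper's one-line proof (necessity from Proposition \ref{main-p1}, sufficiency ``similar to'' the proof of Theorem \ref{main-th2}) amounts to exactly the iterated factorization and composition through degrees $d'$, $d'/2$, $d'/4$ that you package as an application of Theorem \ref{main-th1} with $s=3$ followed by Theorem \ref{main-th2} with $t=2$. Your write-up additionally supplies the bookkeeping the paper omits --- that the three sub-partitions of $A_3$ produced by Theorem \ref{main-th1} retain the factor $2$, that $t\mid d'/2$ is precisely $4\mid d'$, and the counts $6=2+2+2$ and $12=3\cdot 4$ --- all of which checks out.
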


\begin{remark}
The results above synthesize and extend several important precedents in the literature:
\begin{itemize}
\item The $n=3$ case in theorem \ref{main-th1} specializes to the classification established by Wei, Wu, and Xu \cite{WWX24};
\item When $s=2$ with $n\geq3$ in theorem \ref{main-th1}, we recover Pervova and Petronio's earlier result \cite{PP06};
\item Theorem \ref{main-th2} provides an equivalent formulation of Jiang's criterion \cite{JY04} through a novel decomposition approach.
\end{itemize}
\end{remark}

By these results, one would have immediate corollaries for realizable data between $2$-spheres.

\begin{corollary}\label{Co-1}
Under the assumptions of Theorem \ref{main-th1}, we have:
\begin{enumerate}
\item For all $i,j$, the parts satisfy $\alpha_{ij} \leq d'$;
\item Each partition $A_i$($i\geq3$) has length $r_i>s$.

\end{enumerate}
\end{corollary}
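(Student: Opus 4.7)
The plan is to read off both statements directly from Theorem \ref{main-th1}. That theorem, applied to the hypothesized $f$, produces a degree-$d'$ branched cover $g\colon S^{2}\to S^{2}$ whose branching data contains the partitions $[a_{11},\ldots,a_{1r_{1}}]$ and $[a_{21},\ldots,a_{2r_{2}}]$, together with, for each $k\geq 3$, an explicit decomposition of $A_{k}$ into $s$ partitions of $d'$. Both conclusions of the corollary will be structural consequences of this decomposition, so no new topological input is needed beyond invoking Theorem \ref{main-th1}.

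For part (1), every $a_{ij}$ appears as a part of some partition of $d'$ in the branching data of $g$: when $i\in\{1,2\}$, it is a part of $[a_{i1},\ldots,a_{ir_{i}}]$ itself; when $i\geq 3$, it is a part of one of the $s$ sub-partitions of $d'$ arising in the decomposition of $A_{i}$. Since no part of a partition of $d'$ can exceed $d'$, the bound $a_{ij}\leq d'$ follows immediately.

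For part (2), the decomposition of $A_{k}$ ($k\geq 3$) into $s$ nonempty partitions of $d'$ yields the weak inequality $r_{k}\geq s$ by a plain count of parts. Upgrading to the strict inequality $r_{k}>s$ amounts to ruling out the degenerate configuration in which every sub-partition equals the one-part partition $[d']$, equivalently $A_{k}=[d',\ldots,d']$ with $s$ copies of $d'$. I expect this strictness to be the main obstacle: excluding the all-$d'$ configuration requires genuine input beyond the bare decomposition, and the natural route is to combine part (1) with the Riemann--Hurwitz relation $\sum_{k=1}^{n}r_{k}=s(n-2)d'+2$ for $f$ and the analogous identity for $g$, using the branching contributions of $A_{1}$, $A_{2}$, and the remaining $A_{k'}$ to force at least one of the $s$ sub-partitions obtained from $A_{k}$ to have length at least two.
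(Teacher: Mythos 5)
Part (1) of your argument is correct and is surely the paper's intended reasoning (the paper states this corollary without proof, calling it immediate): every reduced part $a_{ij}$ appears as a part of some partition of $d'$ in the branching datum of the degree-$d'$ cover $g$ supplied by Theorem \ref{main-th1}(2), hence $a_{ij}\leq d'$.

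Part (2) is where your write-up stops short, and the missing step cannot in fact be supplied in the generality claimed. You correctly reduce strictness to excluding the configuration $A_k=[\,\underbrace{d',\ldots,d'}_{s}\,]$, in which all $s$ sub-partitions equal $[d']$. The tool that excludes it is not the identity $\sum_k r_k=s(n-2)d'+2$ for $f$ but the Riemann--Hurwitz identity for $g$ itself: if $r_k=s$ then (since $A_k$ is nontrivial, forcing $d'\geq 2$) the degree-$d'$ cover $g$ would have $s$ totally ramified points, whereas $\sum_j\bigl(d'-l(B_j)\bigr)=2d'-2$ shows that a degree-$d'$ branched self-cover of $S^2$ admits at most two totally ramified points. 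This closes your gap whenever $s\geq 3$, but for $s=2$ it gives nothing, and indeed the statement is false there: the datum $\{[2,2],[2,2],[2,2]\}$ in degree $4$ satisfies every hypothesis of Theorem \ref{main-th1} with $s=d'=2$, is realizable (e.g.\ by $z\mapsto\tfrac{1}{4}(z+z^{-1})^{2}$, or by the transitive monodromy $(12)(34),(13)(24),(14)(23)$ in $S_4$), and yet has $r_3=2=s$; more generally $\{[2,\ldots,2],[2,\ldots,2],[d',d']\}$ works for every $d'\geq 2$. So conclusion (2) must either be restricted to $s\geq 3$ (or to $n\geq 4$, as is done in Corollaries \ref{Co-2} and \ref{Co-3}) or weakened to $r_i\geq s$. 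Note that the paper's examples only ever invoke part (1), so nothing downstream depends on the flawed claim.
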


\begin{corollary}\label{Co-2}
Under the assumptions of Theorem \ref{main-th2}, we have:
\begin{enumerate}
\item All parts satisfy $\alpha_{ij} \leq d'/t$;
\item If $n\geq4$, Each partition $A_i$($i\geq4$) has length $r_i>2t$.
\end{enumerate}
\end{corollary}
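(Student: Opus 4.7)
The plan is to derive both parts from Theorem~\ref{main-th2} by transferring the constraints to the auxiliary degree $d'/t$ cover $g$ and combining with the Riemann--Hurwitz identity $\sum_{k=1}^{n}r_{k}=2(n-2)d'+2$ for $f$. For part~(1), assume $f$ exists and apply the theorem to obtain a degree $d'/t$ branched cover $g\colon S^{2}\to S^{2}$. Its branching data is assembled from partitions of $d'/t$: two obtained by splitting $[a_{31},\ldots,a_{3r_{3}}]$, $t$ obtained from each of $[a_{11},\ldots,a_{1r_{1}}]$ and $[a_{21},\ldots,a_{2r_{2}}]$, and $2t$ obtained from each $A_{k}$ with $k\geq 4$. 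In every one of these decompositions the integer $a_{ij}$ itself appears as a part of some partition of $d'/t$, and since a part of a partition of $N$ is at most $N$, this immediately forces $a_{ij}\leq d'/t$.

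For part~(2), fix $k\geq 4$. By part~(1) the parts of $A_{k}$ are each at most $d'/t$ and they sum to $2d'$, so $r_{k}\geq 2t$; the task is to rule out equality. Suppose for contradiction that $r_{k}=2t$; then every part of $A_{k}$ must equal $d'/t$, i.e.\ $A_{k}=[d'/t,\ldots,d'/t]$. Since the parts of $A_{1},A_{2}$ are each $\geq 2$ and those of $A_{3}$ are each $\geq t$, we have $r_{1},r_{2}\leq d'$ and $r_{3}\leq 2d'/t$; combining these with $r_{k}=2t$ and the crude bound $r_{i}\leq 2d'$ for $i\geq 4$, $i\neq k$, the Riemann--Hurwitz identity yields
\[
2(n-2)d'+2\;\leq\; 2d'+2d'/t+2t+2(n-4)d',
\]
which rearranges to $d'/t+t\geq d'+1$. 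The identity $d'+1-(d'/t+t)=(t-1)(d'-t)/t$ shows that under $t\geq 2$ and $t\mid d'$ this inequality holds only when $d'=t$, in which case $d'/t=1$ and $A_{k}=[1,\ldots,1]$ is the trivial partition, contradicting the nontriviality of $A_{k}$. Hence $r_{k}>2t$.

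The main subtlety I anticipate, shared with the proof of Corollary~\ref{Co-1}, is verifying that the ``decomposition'' language in Theorem~\ref{main-th2}(2) really does amount to a partition of the multiset of $a_{ij}$'s, as is explicitly the case for the $A_{3}$ clause, so that each $a_{ij}$ remains an undivided part in the branching data of $g$. Under that reading, the bound $a_{ij}\leq d'/t$ in part~(1) is immediate and, in the tight case of part~(2), genuinely forces $A_{k}=[d'/t,\ldots,d'/t]$; once this interpretation is secured, the Riemann--Hurwitz bookkeeping above does the rest.
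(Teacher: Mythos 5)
Your proof is correct. The paper itself offers no written proof of Corollary~\ref{Co-2} (it is presented as an immediate consequence of Theorem~\ref{main-th2}), so there is nothing to diverge from: your part~(1) is exactly the ``immediate'' argument --- every $a_{ij}$ survives as an undivided part of some partition of $d'/t$ in the branching datum of $g$, hence is at most $d'/t$ --- and your reading of ``decomposing'' as splitting the multiset of parts into sub-multisets with prescribed sums is the standard one in this literature and the only one under which the theorem's statement (and Corollary~\ref{Co-1}, as used in the paper's examples) makes sense. Part~(2) genuinely requires an extra step to upgrade $r_i\geq 2t$ to $r_i>2t$, and your Riemann--Hurwitz computation for $f$, together with the identity $d'+1-(d'/t+t)=(t-1)(d'-t)/t\geq 0$ and the degenerate case $d'=t$ forcing $A_i$ trivial, closes this correctly. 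A marginally more direct route to the same conclusion is to apply Riemann--Hurwitz to $g$ itself: equality $r_i=2t$ forces all $2t$ partitions coming from $A_i$ to equal $[d'/t]$, i.e.\ $g$ would have at least $2t\geq 4$ totally ramified points, whereas a degree-$m$ cover $S^2\to S^2$ with $m\geq 2$ admits at most two (and $m=1$ again makes $A_i$ trivial). Both arguments are sound; yours has the minor advantage of only invoking the Riemann--Hurwitz identity already displayed for $f$.
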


\begin{corollary}\label{Co-3}
Under the assumptions of Theorem \ref{main-th3}, we have:
\begin{enumerate}
\item All parts satisfy $\alpha_{ij} \leq d'/4$;
\item If $n\geq4$, Each partition $A_i$($i\geq4$) has length $r_i>12$.
\end{enumerate}
\end{corollary}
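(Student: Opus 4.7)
The plan is to exploit the equivalence in Theorem \ref{main-th3} and read both assertions directly off the structure of the reduced degree-$d'/4$ cover $g\colon S^{2}\to S^{2}$. Assuming the branched cover $f$ exists, Theorem \ref{main-th3} supplies a $g$ whose branching data is obtained by distributing the parts of $[a_{11},\ldots,a_{1r_{1}}]$, $[a_{21},\ldots,a_{2r_{2}}]$, and $[a_{31},\ldots,a_{3r_{3}}]$ into $4$, $4$, and $6$ partitions of $d'/4$ respectively, together with, for each $k\geq 4$, $12$ partitions of $d'/4$ obtained by distributing the parts of $A_{k}$.

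Assertion (1) is then immediate: every $\alpha_{ij}$ occurs as a part of one of these partitions of $d'/4$, and any part of such a partition is at most $d'/4$.

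For assertion (2), fix $k\geq 4$. Each of the $12$ partitions of $d'/4$ obtained from $A_{k}$ has at least one part, so summing lengths yields $r_{k}\geq 12$. Suppose for contradiction that $r_{k}=12$; then each of the $12$ partitions has length exactly $1$, must therefore equal $[d'/4]$, and hence $A_{k}=[d'/4]^{12}$. If $d'/4\geq 2$, each of these $12$ partitions in the branching data of $g$ contributes $d'/4-1$ to the Riemann--Hurwitz sum $\sum_{p}(m_{p}-1)=2(d'/4-1)$ for $g\colon S^{2}\to S^{2}$, forcing $12(d'/4-1)\leq 2(d'/4-1)$, an absurdity. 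If instead $d'/4=1$ (so $d'=4$), then $A_{k}=[1]^{12}$ is the trivial partition, contradicting the nontriviality hypothesis on $A_{k}$. In either case we reach a contradiction, so $r_{k}>12$.

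The only delicate point is the bifurcation at $d'/4=1$, where the Riemann--Hurwitz obstruction degenerates to $0\leq 0$ and one must fall back on the nontriviality clause in the hypotheses of Theorem \ref{main-th3}. Apart from this, the corollary is a direct formal consequence of the decomposition description furnished by Theorem \ref{main-th3}, and no serious obstacle is anticipated.
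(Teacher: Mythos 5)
Your proof is correct and follows the route the paper intends: the paper states this corollary without proof as an ``immediate'' consequence of Theorem \ref{main-th3}, reading both bounds off the decomposition of the branching data of the degree-$d'/4$ cover $g$, exactly as you do. Your extra care in upgrading $r_k\geq 12$ to the strict inequality $r_k>12$ --- via the Riemann--Hurwitz count $12(d'/4-1)\leq 2(d'/4-1)$ when $d'/4\geq 2$, and via the nontriviality of $A_k$ in the degenerate case $d'=4$ --- is sound and actually supplies a detail the paper leaves unstated.
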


A natural question arises regarding the flexibility of branching data: Given a realizable branched cover $f \colon S^2 \to S^2$ with branching datum
\[
\mathcal{D} = \{[\alpha_1, \ldots, \alpha_A], [\beta_1, \ldots, \beta_B], [\gamma_1, \ldots, \gamma_C]\},
\]
can we arbitrarily modify one partition while keeping the other two fixed? Specifically, for any modified candidate
\[
\widetilde{\mathcal{D}} = \{[\alpha_1, \ldots, \alpha_A], [\beta_1, \ldots, \beta_B], [\widetilde{\gamma}_1, \ldots, \widetilde{\gamma}_C]\},
\]
does there exist a corresponding branched cover $g \colon S^2 \to S^2$ with datum $\widetilde{\mathcal{D}}$?

The answer is negative in general. For example, Zheng's work \cite{Zhe06} demonstrates that:
\begin{itemize}
\item The datum $\{[5,3], [2,2,2,2], [3,2,2,1]\}$ is realizable;
\item However, the modified datum $\{[5,3], [2,2,2,2], [3,3,1,1]\}$ cannot be realized by any branched cover.
\end{itemize}
This shows that the realizability of branching data exhibits non-trivial constraints beyond just the Riemann-Hurwitz condition.

At last, the following theorem  follows directly from Theorem \ref{main-th1} and a theorem of Song and Xu \cite{SX20}.

\begin{theorem}
Let $k\geq3$, $x\geq1$, and $y\geq1$ be integers satisfying $k\geq\max\{x,y\}$. There exists a branched cover $f\colon\overline{\mathbb{C}}\to\overline{\mathbb{C}}$ between Riemann spheres with branching data
\[
\Big\{[a_1,\ldots,a_{x+y}],\, [\underbrace{2,\ldots,2}_{k-y},2y],\, [\underbrace{2,\ldots,2}_{k-x},2x]\Big\}
\]
if and only if:
\begin{enumerate}
\item The partition $[a_1,\ldots,a_{x+y}]$ can be decomposed into two partitions of $k$;
\item The integer $k$ satisfies $\frac{k}{\gcd(a_1,\ldots,a_{x+y})} \geq \max\{x,y\}$.
\end{enumerate}
\end{theorem}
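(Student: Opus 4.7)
The plan is to apply Theorem~\ref{main-th1} with $n=3$, $s=2$, and $d'=k$, taking
\[
A_1=[\underbrace{2,\ldots,2}_{k-y},\,2y],\qquad A_2=[\underbrace{2,\ldots,2}_{k-x},\,2x],\qquad A_3=[a_1,\ldots,a_{x+y}].
\]
The Riemann-Hurwitz count $\sum r_k=2k+2$ is automatic from $(k-y+1)+(k-x+1)+(x+y)=2k+2$, so the hypotheses of Theorem~\ref{main-th1} are met. It then follows that the existence of a degree-$2k$ branched cover with branching data $\{A_1,A_2,A_3\}$ is equivalent to the existence of a degree-$k$ branched cover $g\colon S^2\to S^2$ whose branching data is
\[
\bigl\{[\underbrace{1,\ldots,1}_{k-y},y],\ [\underbrace{1,\ldots,1}_{k-x},x],\ P_1,\ P_2\bigr\},
\]
where $P_1\sqcup P_2=[a_1,\ldots,a_{x+y}]$ and each $P_i$ is a partition of $k$.

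The existence of such a splitting $P_1\sqcup P_2$ is precisely condition~(1). Granting it, I would next invoke Song and Xu's theorem from~\cite{SX20}, which characterizes the existence of degree-$k$ covers of $S^2$ carrying the two ``near-trivial'' partitions $[1,\ldots,1,y]$ and $[1,\ldots,1,x]$ in terms of a gcd condition on the remaining parts. In our setting the parts of $P_1$ and $P_2$ together are exactly $a_1,\ldots,a_{x+y}$, so their gcd is $\gcd(a_1,\ldots,a_{x+y})$, and the Song-Xu criterion translates to $k/\gcd(a_1,\ldots,a_{x+y})\geq\max\{x,y\}$, which is condition~(2). Chaining the two equivalences yields the theorem in both directions.

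The main obstacle I anticipate is matching the statements at the interface: I must confirm that the cited form of~\cite{SX20} applies to the four-branch-point configuration produced by Theorem~\ref{main-th1} (rather than only a three-branch-point variant), and check that the resulting gcd criterion depends only on the multiset $\{a_1,\ldots,a_{x+y}\}$ and not on the particular decomposition $P_1\sqcup P_2$. Once these bookkeeping issues are dispatched, the argument is essentially a synthesis of the two preceding results, with no additional geometric input required.
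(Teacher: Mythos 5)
Your proposal is correct and is essentially the paper's own argument: the paper offers no independent proof, stating only that the theorem ``follows directly from Theorem \ref{main-th1} and a theorem of Song and Xu,'' which is exactly the reduction you carry out (apply Theorem \ref{main-th1} with $s=2$, $d'=k$ to pass to the degree-$k$ datum $\{[1,\ldots,1,y],[1,\ldots,1,x],P_1,P_2\}$, then invoke the Song--Xu criterion). Your two flagged interface checks are the right ones, and both are unproblematic since $P_1\sqcup P_2$ is always the fixed multiset $\{a_1,\ldots,a_{x+y}\}$, so the gcd in the Song--Xu condition is independent of the chosen decomposition.
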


The paper is organized as follows: Section 2 reviews fundamental concepts and preliminary results. Section 3 presents detailed proofs of our results. Section 4 provides new examples of exceptional branching data, illustrating the application of our theoretical results.

\section{Preliminaries}
\label{sec:preliminaries}
This section establishes the foundational framework for branched covers between compact, connected, orientable surfaces (i.e. Riemann surfaces) and presents key results essential for our subsequent analysis. Our notation and terminology primarily follow \cite{PP09}.

\subsection{Branched Covers}
Let $X$ and $Y$ be compact, connected, orientable surfaces. A \emph{branched cover} $f:Y\rightarrow X$ is a surjective map that is locally modeled on power maps $\mathbb{C}\ni z\mapsto z^k\in\mathbb{C}$ for positive integers $k$. The integer $k$ is called the \emph{local degree} at the preimage point corresponding to $0\in\mathbb{C}$. When $k>1$, the image point in $X$ is called a \emph{branch point}.

Key properties of branched covers include:
\begin{itemize}
\item Branch points are isolated, hence for any branched cover $f$ there exist finitely many (say $n$) branch points
\item Removing all branch points in $X$ and their preimages in $Y$ yields a genuine covering map of degree $d$
\item The local degrees at preimages of each branch point form a nontrivial partition $A_i$ of $d$ (a multiset of positive integers summing to $d$ with at least one element $>1$)
\end{itemize}

We denote by $l(A_i)$ the length of partition $A_i$, and call $\mathcal{D}(f)=\{A_1,\ldots,A_n\}$ the \emph{branching data} of $f$.

\subsection{The Hurwitz Existence Problem}
For a branched cover $f:Y\rightarrow X$ between compact, connected, orientable surfaces with Euler characteristics $\chi(Y)$ and $\chi(X)$ respectively, the following fundamental relation holds:

\begin{theorem}[Riemann-Hurwitz Formula]
\label{thm:rhf}
For any degree $d$ branched cover $f:Y\rightarrow X$ with branching data $\mathcal{D}=\{A_1,\ldots,A_n\}$, we have:
\begin{equation}
\label{eq:rhf}
d\chi(X) - \chi(Y) = nd - \sum_{i=1}^n l(A_i)
\end{equation}
\end{theorem}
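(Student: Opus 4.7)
The plan is to prove the Riemann-Hurwitz formula via a standard cellular argument: fix a nice triangulation of $X$, pull it back through $f$ to get a cell structure on $Y$, and compare the two Euler characteristic counts.

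First I would choose a triangulation $\mathcal{T}$ of $X$ in which each of the $n$ branch points $x_1,\ldots,x_n$ appears as a $0$-cell; since there are only finitely many branch points this is always possible by barycentric subdivision if needed. Writing $V$, $E$, $F$ for the numbers of vertices, edges, and faces of $\mathcal{T}$, we have $\chi(X)=V-E+F$.

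Next I would lift $\mathcal{T}$ through $f$ to produce a cellular decomposition $\mathcal{T}'$ of $Y$. Away from the branch points $f$ is an honest $d$-fold covering, so each open edge and each open face of $\mathcal{T}$ has exactly $d$ preimages, each mapping homeomorphically onto the cell below. For the $0$-skeleton, a non-branch vertex of $\mathcal{T}$ has $d$ preimages, whereas the branch point $x_i$ has exactly $l(A_i)$ preimages: indeed, the local model $z\mapsto z^k$ shows that the number of preimages of $x_i$ equals the number of parts of its partition $A_i$. Hence
\begin{align*}
V' &= d(V-n)+\sum_{i=1}^{n} l(A_i),\\
E' &= dE,\\
F' &= dF.
\end{align*}

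Computing and rearranging gives
\[
\chi(Y)=V'-E'+F' = d\chi(X)-nd+\sum_{i=1}^{n}l(A_i),
\]
which is exactly the claimed formula. The only non-bookkeeping step is the verification that lifting $\mathcal{T}$ really produces a CW-decomposition of $Y$ with the cell counts above; this is the main (and only) obstacle, and it is handled by appealing to the local normal form $z\mapsto z^k$ of a branched cover near each preimage, which ensures that the star of $x_i$ in $\mathcal{T}$ lifts to $l(A_i)$ combinatorial stars with the correct incidences.
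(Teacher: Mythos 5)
Your argument is correct and complete: the paper states the Riemann--Hurwitz formula as a classical fact without proof, and your triangulation argument (branch points as vertices, $V'=d(V-n)+\sum_i l(A_i)$, $E'=dE$, $F'=dF$) is the standard derivation, with the only subtle point --- that the pulled-back cells really form a CW-structure --- correctly reduced to the local model $z\mapsto z^k$. Nothing further is needed.
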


 The \emph{Hurwitz existence problem} asks whether, given two compact, connected and orientable surfaces $Y$ and $X$ with Euler characteristics $\chi(Y)$ and
 $\chi(X)$ respectively, an integer $d\geq2$, and $n$ nontrivial partitions $A_{1},\ldots,A_{n}$ of
 $d$ which satisfy the Riemann-Hurwitz formula \eqref{eq:rhf}, there exists a branched cover
 $f:Y\rightarrow X$ such that $\mathcal{D}=\{A_1,\ldots,A_n\}$ is the branching data of $f$. Thus we have the following notation.

\begin{definition}
Given compact, connected, orientable surfaces $X,Y$ with Euler characteristics $\chi(X),\chi(Y)$, a positive integer $d\geq 2$, and $n$ nontrivial partitions $A_1,\ldots,A_n$ of $d$ satisfying the Riemann-Hurwitz formula \eqref{eq:rhf}, we call $\mathcal{D}=\{A_1,\ldots,A_n\}$ a candidate branching data. If there exists a branched cover $f:Y\rightarrow X$ realizing $\mathcal{D}$, we say $\mathcal{D}$ is realizable; otherwise, it is exceptional.
\end{definition}

\subsection{Riemann Existence Theorem}
The classical work of Hurwitz \cite{Hur91} established a profound connection between branched coverings and group-theoretic data.

\begin{theorem}[Riemann Existence Theorem, \cite{SK11,Hur91}]
\label{thm:ret}
Let $X$ be a connected Riemann surface with discrete subset $\Delta\subset X$. For any:
\begin{itemize}
\item Positive integer $d\geq 1$
\item Transitive permutation representation $\rho:\pi_1(X\setminus\Delta)\rightarrow S_d$
\end{itemize}
there exists a unique (up to equivalence) connected Riemann surface $Y$ and proper holomorphic map $f:Y\rightarrow X$ whose monodromy homomorphism is $\rho$.
\end{theorem}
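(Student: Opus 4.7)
The strategy is a two-step reduction: first apply Theorem~\ref{main-th1} with $s=2$ to reduce the degree-$2k$ existence problem to a degree-$k$ existence problem, and then invoke the Song--Xu theorem from \cite{SX20} to convert the reduced existence question into the stated criterion.

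To apply Theorem~\ref{main-th1}, observe that the two partitions $[\underbrace{2,\ldots,2}_{k-y},2y]$ and $[\underbrace{2,\ldots,2}_{k-x},2x]$ have every part divisible by $2$. Setting $s=2$, $d'=k$, $n=3$, we identify
\[
A_1 = [\underbrace{2\cdot 1,\ldots,2\cdot 1}_{k-y},\, 2\cdot y], \quad A_2 = [\underbrace{2\cdot 1,\ldots,2\cdot 1}_{k-x},\, 2\cdot x], \quad A_3 = [a_1,\ldots,a_{x+y}].
\]
The Riemann--Hurwitz condition $r_1+r_2+r_3 = 2k+2$ follows from $(k-y+1)+(k-x+1)+(x+y)=2k+2$. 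Theorem~\ref{main-th1} then says that the candidate datum is realizable if and only if there exists a degree-$k$ branched cover $g\colon S^2\to S^2$ whose branching data contains $[\underbrace{1,\ldots,1}_{k-y},y]$ and $[\underbrace{1,\ldots,1}_{k-x},x]$, together with two partitions $B_1,B_2$ of $k$ obtained by splitting the multiset $[a_1,\ldots,a_{x+y}]$.

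The existence of such a $g$ is precisely the scenario addressed by Song and Xu in \cite{SX20}. The requirement that $[a_1,\ldots,a_{x+y}]$ split into two partitions $B_1,B_2$ of $k$ is exactly condition~(1) of the theorem to be proved, while the Song--Xu arithmetic criterion supplies condition~(2), namely $k/\gcd(a_1,\ldots,a_{x+y})\geq\max\{x,y\}$. Composing the two equivalences yields the desired if-and-only-if statement.

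The principal delicacy lies in the boundary cases $x=1$ or $y=1$, where a reduced partition such as $[1^{k-1},1]=[1^k]$ is trivial and so contributes no genuine branch point to the data of $g$. The main obstacle of the proof is to verify that both Theorem~\ref{main-th1} and the Song--Xu criterion continue to apply gracefully in these degenerate situations, either by direct inspection of their statements (a trivial partition is harmless: it simply lowers the effective branch-point count without affecting either side of the equivalence) or by an explicit reduction to a still simpler configuration in which all remaining partitions are nontrivial.
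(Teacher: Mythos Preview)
Your proposal does not address the stated theorem at all. The statement you were asked to prove is the \emph{Riemann Existence Theorem} (Theorem~\ref{thm:ret}), a classical result asserting that a transitive permutation representation of $\pi_1(X\setminus\Delta)$ lifts to a proper holomorphic map from some connected Riemann surface $Y$. The paper does not prove this theorem; it merely cites it from \cite{SK11,Hur91} as background in the preliminaries section.

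What you have written is instead a proof sketch for an entirely different result: the final theorem of Section~1, which characterizes realizability of the branching datum
\[
\Big\{[a_1,\ldots,a_{x+y}],\,[\underbrace{2,\ldots,2}_{k-y},2y],\,[\underbrace{2,\ldots,2}_{k-x},2x]\Big\}.
\]
For \emph{that} theorem your outline is reasonable and matches the paper's own one-line remark (``follows directly from Theorem~\ref{main-th1} and a theorem of Song and Xu \cite{SX20}''). But as a proof of the Riemann Existence Theorem it is a non sequitur: nothing in your argument constructs a covering surface from monodromy data, establishes holomorphicity, or addresses uniqueness up to equivalence. You have simply matched your proof to the wrong statement.
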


This theorem translates the topological problem of existence of branched covers into an algebraic problem about permutation representations. However, this algebraic characterization presents significant computational challenges in practical applications.

\subsection{Sto\"{i}low's Theorem}
The classification of topological branched covers is facilitated by Sto\"{i}low's Theorem, which need the following notation.

\begin{definition}
A continuous map $f:Y\rightarrow X$ between surfaces is:
\begin{itemize}
\item \emph{Light} if $f^{-1}(x)$ is totally disconnected for all $x\in X$
\item \emph{Discrete} if $f^{-1}(x)$ is discrete for all $x\in X$
\item \emph{Open} if it maps open sets to open sets
\end{itemize}
The \emph{branch set} $B_f\subset Y$ consists of points where $f$ fails to be a local homeomorphism.
\end{definition}

\begin{theorem}[Sto\"{i}low's Theorem, \cite{RP19,St38,Wh42,Wh64}]\label{St1928}
\label{thm:stoilow}
For any continuous, open, light map $f:Y\rightarrow X$ between Riemann surfaces:
\begin{enumerate}
\item $f$ is discrete with discrete branch set $B_f$
\item There exists a Riemann surface $\widetilde{Y}$ and homeomorphism $h:Y\rightarrow\widetilde{Y}$ such that $f\circ h^{-1}:\widetilde{Y}\rightarrow X$ is holomorphic
\end{enumerate}
\end{theorem}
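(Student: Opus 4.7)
The plan is to deduce both parts of the theorem from a single \emph{local normal form}: each $y_0\in Y$ admits a connected open neighborhood $U$, a homeomorphism $\phi:U\to\mathbb{D}$ with $\phi(y_0)=0$, and a holomorphic chart $\psi$ on $X$ centered at $f(y_0)$, such that $\psi\circ f\circ\phi^{-1}(z)=z^{n}$ for some integer $n=n(y_0)\geq 1$. Granting this, part (1) is immediate, since in the model $z^n$ the preimage of $0$ is exactly $\{0\}$ and $0$ is the only branch point, so $f$ is discrete and $B_f$ is discrete. For part (2) I would take the family $\{(U,\phi)\}$ as a complex atlas on the underlying topological surface of $Y$: transitions between overlapping charts are homeomorphisms of open subsets of $\mathbb{D}$ that intertwine the relevant power maps, and a standard argument — pulling back the holomorphic structure from $X$ on the unbranched locus, then extending across isolated punctures via Riemann's removable singularity theorem — shows these transitions are holomorphic. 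The resulting Riemann surface is $\widetilde{Y}$, $h$ is the set-theoretic identity, and by construction $f\circ h^{-1}$ is a power map in each chart, hence holomorphic.

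Building the local normal form occupies the bulk of the proof. I would first shrink to a disk chart $D\subset\mathbb{C}$ around $x_0:=f(y_0)$ and choose a connected open neighborhood $U\ni y_0$ with $\overline U$ compact, $f(\overline U)\subset D$, and $\overline U\cap f^{-1}(x_0)=\{y_0\}$; this is possible because lightness (total disconnectedness of the fiber) together with openness of $f$ forces $f^{-1}(x_0)$ to be locally finite near $y_0$. The central claim is then that $f:U\setminus\{y_0\}\to D\setminus\{x_0\}$ is a finite-sheeted covering map. Once granted, covering theory applied to $\pi_1(D\setminus\{x_0\})\cong\mathbb{Z}$ classifies this covering as $w\mapsto w^n$ for a unique $n\geq 1$, producing the desired homeomorphism $\phi$ on $U\setminus\{y_0\}$, which extends continuously across $y_0$ by boundedness.

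The main obstacle is precisely this covering property. Without a holomorphic hypothesis the argument principle is unavailable, so the classical proofs (Sto\"{i}low, Whyburn) rely on purely topological tools. The route I would follow is: construct $U$ as the connected component of $y_0$ in $f^{-1}(D_r)$ for a sufficiently small sub-disk $D_r\subset D$, arranging that $\partial U\subset f^{-1}(\partial D_r)$; establish a finite mapping degree for $f|_{\overline U}\to\overline{D_r}$ via a Brouwer-type degree argument on the compact pair $(\overline U,\partial U)\to(\overline{D_r},\partial D_r)$, using lightness on the compact boundary circle to bound fiber sizes; and then use openness combined with invariance of domain to conclude that above $D_r\setminus\{x_0\}$ the map is a local homeomorphism (any extra branch point in $U\setminus\{y_0\}$ would force an excess contribution to the local degree that contradicts the global count), hence a proper covering. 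All remaining steps — classification of the covering by its degree, extension of $\phi$ across the puncture, and verification that chart transitions are holomorphic — then reduce to routine manipulations.
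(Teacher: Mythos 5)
The paper you are checking against does not actually prove this statement: Sto\"{i}low's theorem is quoted there as a classical result with pointers to \cite{St38,Wh42,RP19}, so there is no in-paper argument to compare yours with. Your outline does follow the standard route of those references --- normal neighborhoods, a Brouwer-degree count, classification of finite coverings of the punctured disk via $\pi_1\cong\mathbb{Z}$, and pullback of the complex structure with removable singularities --- and the reduction of both parts (1) and (2) to the local normal form, as well as the atlas argument for (2), are sound in architecture.

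However, the sketch of the hard core has three genuine gaps. First, choosing $U$ with $\overline{U}\cap f^{-1}(x_0)=\{y_0\}$ already presupposes that $y_0$ is isolated in its fiber, which is assertion (1); your justification that ``lightness together with openness forces $f^{-1}(x_0)$ to be locally finite'' is the theorem, not an observation --- a compact totally disconnected set (a Cantor set) need not be discrete, so lightness alone gives nothing, and the isolation has to come \emph{out of} the degree argument rather than feed into it. Second, and for the same reason, ``using lightness on the compact boundary circle to bound fiber sizes'' is not a valid mechanism; finiteness of fibers follows from $\deg(f|_U)=\sum_{y\in f^{-1}(x)\cap U} i(y,f)$ together with the fact that every local index of an open light map of surfaces is at least $1$ (sense-preservation, all local indices of one sign), which is itself a nontrivial theorem your proposal never addresses. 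Third, the claim that an extra branch point in $U\setminus\{y_0\}$ ``would force an excess contribution to the local degree that contradicts the global count'' is false as stated: a fiber containing one point of index $k\geq 2$ and $n-k$ points of index $1$ is perfectly consistent with total degree $n$. What you actually need is that branch points cannot accumulate at $y_0$, i.e.\ discreteness of $B_f$, and this requires its own two-dimensional argument (e.g.\ lower semicontinuity of $x\mapsto\#(f^{-1}(x)\cap U)$ plus induction on the degree, as in \cite{RP19}, or Whyburn's quasi-interior map theory); this is precisely the step that makes dimension two special. Until these three points are supplied, the ``central claim'' that $f:U\setminus\{y_0\}\to D\setminus\{x_0\}$ is a finite covering is not established, and both conclusions of the theorem rest on it.
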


This establishes that topological branched covers between orientable surfaces are essentially equivalent to holomorphic ones, being locally modeled by power maps $z\mapsto z^k$.

\section{Proofs of Main Results}
\label{sec:proofs}

\subsection{Preliminary Lemmas}
We begin by establishing three fundamental lemmas that form the technical foundation for our proofs. First, we recall a
 well known result in the field of topology.

\begin{theorem}\label{ZS96}
Let $X$ be a connected smooth manifold with dimension $\geq2 $. Let $p_1,\ldots,p_n$  be $n$ distinct points in $X$. Then for any $n$ distinct points $q_1,\ldots,q_n$ in $X$ there exists a smooth homeomorphism $h:X\rightarrow X$ that is homotopic to the identity map and satisfies $h(p_i)=q_i$, for $i=1,\ldots,n$.
\end{theorem}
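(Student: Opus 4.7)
The plan is to prove the theorem by induction on $n$, with the base case being a single-point transitivity result that is well known for connected smooth manifolds of any dimension. The dimensional hypothesis $\dim X\geq 2$ enters only in the inductive step, via the fact that $X$ remains connected (indeed, path-connected) after removing any finite set of points.

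For the base case of moving one point, I would prove the following sub-lemma: given $p,q\in X$ and any open neighborhood $U$ of a smooth path $\gamma\colon[0,1]\to X$ from $p$ to $q$, there is a smooth diffeomorphism $\varphi\colon X\to X$, isotopic to $\mathrm{id}_X$ through diffeomorphisms supported in $U$, with $\varphi(p)=q$. The construction is the standard one: cover $\gamma([0,1])$ by finitely many coordinate charts $(V_j,\psi_j)$ with $V_j\subset U$ and with consecutive charts overlapping in a region containing a point of $\gamma$. Using a smooth bump function, build in each chart a compactly supported diffeomorphism of $\mathbb{R}^{\dim X}$ that translates a chosen point in $\psi_j(V_j\cap V_{j+1})$ to another, extended by the identity. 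Composing these chart-wise diffeomorphisms produces a diffeomorphism of $X$, isotopic to the identity through a concatenation of the local isotopies, carrying $p$ to $q$.

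For the inductive step, assume the statement holds for $n-1$ points. Apply the hypothesis to obtain a smooth diffeomorphism $h'$ isotopic to $\mathrm{id}_X$ with $h'(p_i)=q_i$ for $1\leq i\leq n-1$, and set $p_n'=h'(p_n)$. If $p_n'=q_n$ we are done; otherwise I use $\dim X\geq 2$ to choose a smooth embedded path $\gamma$ from $p_n'$ to $q_n$ inside the open set $X\setminus\{q_1,\ldots,q_{n-1}\}$, which is connected because removing finitely many points from a connected manifold of dimension $\geq 2$ preserves connectedness. Taking $U$ to be a tubular neighborhood of $\gamma$ disjoint from $\{q_1,\ldots,q_{n-1}\}$, the sub-lemma yields a diffeomorphism $h''$ supported in $U$, isotopic to $\mathrm{id}_X$, with $h''(p_n')=q_n$ and $h''(q_i)=q_i$ for $i<n$. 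Then $h=h''\circ h'$ is isotopic to the identity (hence homotopic) and satisfies $h(p_i)=q_i$ for all $i$.

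The main obstacle I anticipate is bookkeeping at the junctions of the chart-wise construction in the sub-lemma: one must ensure that the local diffeomorphisms fit together into a globally smooth map and that the combined isotopy stays supported inside $U$, avoiding the other marked points $q_1,\ldots,q_{n-1}$. This is handled by choosing the tubular neighborhood $U$ thin enough at the outset (possible since $\{q_i\}_{i<n}$ is a closed discrete set disjoint from $\gamma$) and by performing each local translation inside a compact ball compactly contained in the overlap of consecutive charts, so that the supports chain together cleanly. The rest of the argument is a routine assembly of these ingredients.
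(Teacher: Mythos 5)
The paper does not actually prove this statement: it is quoted as ``a well known result in the field of topology'' and used as a black box, so there is no in-paper argument to compare against. Your proposal is the standard proof of $n$-point transitivity of the diffeomorphism group of a connected manifold of dimension $\geq 2$, and it is correct: the base case via bump-function translations in a chain of charts along a path, and the inductive step using that $X\setminus\{q_1,\ldots,q_{n-1}\}$ is still connected when $\dim X\geq 2$, are exactly the right ingredients. Two small points worth making explicit: (i) $p_n'=h'(p_n)$ avoids $q_1,\ldots,q_{n-1}$ automatically because $h'$ is injective and $h'(p_i)=q_i$, so the path $\gamma$ really does exist in the punctured manifold; and (ii) the resulting $h$ is a diffeomorphism isotopic to the identity, which is stronger than the ``smooth homeomorphism homotopic to the identity'' demanded by the statement, so the conclusion follows. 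There is no gap.
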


From \textbf{Theorems \ref{ZS96}} and \textbf{\ref{St1928}}, we obtain the following lemmas:
\begin{lemma}
Let $f:Y\rightarrow X$ be a branched cover between two compact connected Riemann surfaces $Y$ and $X$ with branching data $\mathcal{D}=\{A_1,\ldots,A_n\}$. Suppose the branching points of $f$ in $X$ are $p_1,\cdots,p_n$. Then for any $n$ distinct points $q_1,\ldots,q_n$ in
$X$ there exists a branched cover $g:Y\rightarrow X$ such that its branching data is $\mathcal{D}=\{A_1,\ldots,A_n\}$ and its branching points are $q_1,\cdots,q_n$.
\end{lemma}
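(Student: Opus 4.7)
The plan is to invoke exactly the two theorems cited just before the lemma. First, I would apply Theorem~\ref{ZS96} to the smooth $2$-manifold $X$, the given branch points $p_1,\ldots,p_n$, and the target points $q_1,\ldots,q_n$, obtaining a smooth homeomorphism $h\colon X\to X$ (homotopic to the identity) with $h(p_i)=q_i$ for $i=1,\ldots,n$. Then set $g_0 := h\circ f\colon Y\to X$.

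Second, I would verify that $g_0$ satisfies the hypotheses of Sto\"{i}low's Theorem. It is continuous and surjective as the composition of a branched cover with a homeomorphism. It is open, since branched covers are locally modelled on power maps (which are open) and $h$ is a homeomorphism. It is light because $g_0^{-1}(x)=f^{-1}(h^{-1}(x))$ is finite for every $x\in X$. Moreover, since $h$ is a homeomorphism of the target, $g_0$ fails to be a local homeomorphism at $y\in Y$ if and only if $f$ does, with the same local degree; hence the branch set of $g_0$ in $X$ is precisely $\{h(p_1),\ldots,h(p_n)\}=\{q_1,\ldots,q_n\}$, and above each $q_i$ the multiset of local degrees of $g_0$ coincides with that of $f$ above $p_i$.

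Finally, Sto\"{i}low's Theorem (Theorem~\ref{St1928}) supplies a Riemann surface $\widetilde{Y}$ and a homeomorphism $\phi\colon Y\to\widetilde{Y}$ such that $g:=g_0\circ\phi^{-1}\colon\widetilde{Y}\to X$ is holomorphic, and hence locally modelled by power maps $z\mapsto z^k$. By the paper's definition, this makes $g$ a branched cover; since $\phi$ preserves local degrees, its branching data is still $\mathcal{D}=\{A_1,\ldots,A_n\}$ and its branch points are $q_1,\ldots,q_n$. Identifying $\widetilde{Y}$ with $Y$ via $\phi$ yields the desired branched cover.

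I do not foresee any substantive obstacle; the argument is essentially a concatenation of the two quoted theorems. The only point meriting care is the bookkeeping showing that the partition above each branch point is preserved under both $h$ (acting on the target) and $\phi$ (acting on the domain), and the mild subtlety that the Riemann surface structure on $Y$ used to realize $g$ may differ from the original one, which is harmless since the Hurwitz existence problem is topological in nature.
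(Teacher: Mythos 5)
Your proof is correct and follows the paper's own argument: apply Theorem~\ref{ZS96} to obtain the homeomorphism $h$ with $h(p_i)=q_i$ and take $g=h\circ f$. The additional appeal to Sto\"{i}low's Theorem is harmless but not needed for this lemma as stated (the composition $h\circ f$ is already a topological branched cover); the paper defers that holomorphic upgrade to the subsequent lemmas.
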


\begin{proof}
By \textbf{Theorem \ref{ZS96}}, there exists a smooth homeomorphism $h:X\rightarrow X$  such that $h(p_i)=q_i$ for $i=1,\ldots,n$. Obviously, $h\circ f:Y\rightarrow X$ is the desired map.
\end{proof}

\begin{lemma}
Let $f:Y\rightarrow X$ be a branched cover between two compact connected Riemann surfaces $Y$ and $X$. Then there exists a Riemann surface $\widetilde{Y}$ and a homeomorphism $h:Y\rightarrow\widetilde{Y}$ such that the composition $f\circ h^{-1}:\widetilde{Y}\rightarrow X$ is a holomorphic map.
\end{lemma}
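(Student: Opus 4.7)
The plan is to derive this lemma as an essentially immediate consequence of Sto\"{i}low's Theorem (Theorem \ref{St1928}), so the real work is checking that a branched cover satisfies the hypotheses of that theorem, namely that it is continuous, open, and light.

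First I would unpack the definition of a branched cover. By hypothesis $f\colon Y\to X$ is locally modeled at every point on a power map $z\mapsto z^k$ with $k\geq 1$. Continuity is immediate from this local description. For lightness, I would observe that since $Y$ is compact and $f$ has finite degree $d$, each fiber $f^{-1}(x)$ is a finite set, hence totally disconnected; therefore $f$ is light. For openness, I would note that the local model $z\mapsto z^k$ on $\mathbb{C}$ sends open neighborhoods of $0$ to open neighborhoods of $0$ (this is elementary: a disk of radius $r$ maps onto a disk of radius $r^k$), so $f$ carries a basis of open sets to open sets and is therefore open.

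Having verified that $f$ is a continuous, open, light map between Riemann surfaces, I would apply part (2) of Theorem \ref{St1928} directly: it produces a Riemann surface $\widetilde{Y}$ and a homeomorphism $h\colon Y\to\widetilde{Y}$ such that $f\circ h^{-1}\colon\widetilde{Y}\to X$ is holomorphic, which is exactly the conclusion sought.

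I do not expect any significant obstacle; the only point requiring a moment's care is the verification of openness, since lightness and continuity are transparent from the power-map local model and from compactness. The content of the lemma is really just the observation that the topological/combinatorial notion of branched cover used elsewhere in this paper feeds directly into Sto\"{i}low's analytic machinery, so that once branching data are realizable topologically they are automatically realizable by a holomorphic map after transporting the complex structure along $h$.
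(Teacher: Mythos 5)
Your proposal is correct and follows exactly the paper's route: the paper also derives this lemma directly from Sto\"{i}low's Theorem, simply stating that it is an immediate consequence. Your additional verification that a branched cover is continuous, open, and light (so that the theorem's hypotheses are met) is accurate and in fact supplies the detail the paper omits.
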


\begin{proof}
This result is directly derived from \textbf{Theorem \ref{St1928}}.
\end{proof}

Furthermore, when $Y=X=S^{2}$, given that the complex structure on $S^{2}$ is unique, we can deduce the following lemma.

\begin{lemma}\label{main-le1}
Let $f:S^{2}\rightarrow S^{2}$ be a branched cover between the Riemann spheres $S^{2}$ with $n$ branch points. Then for any distinct $n$ points $p_{1},\ldots,p_{n}$ in $S^{2}$ there exists a holomorphic branched cover $g:S^{2}\rightarrow S^{2}$  such that its branching data is identical to that of $f$, and $p_{1},\ldots,p_{n}$ are all branch points of $g$.
\end{lemma}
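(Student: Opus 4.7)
The plan is to stitch together the two preceding lemmas with the classical fact that the Riemann sphere admits a unique complex structure up to biholomorphism. Starting from the given $f\colon S^2\to S^2$, whose branch points form some arbitrary $n$-tuple $\{q_1,\ldots,q_n\}$ in the target, I would first invoke the preceding lemma derived from Theorem \ref{ZS96}: there is a branched cover $g_1\colon S^2\to S^2$ with the same branching data as $f$ whose branch points are exactly $p_1,\ldots,p_n$. This step achieves the topological relocation of the branch points but provides no holomorphicity.

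Next, I would apply the preceding Sto\"{\i}low-based lemma to $g_1$: there exist a Riemann surface $\widetilde{S}$ and a homeomorphism $h\colon S^2\to\widetilde{S}$ such that $g_1\circ h^{-1}\colon\widetilde{S}\to S^2$ is holomorphic. Since Sto\"{\i}low's theorem alters only the complex structure on the domain, the target remains the standard Riemann sphere, and the branch points in the target are still $p_1,\ldots,p_n$.

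The decisive ingredient is the uniqueness of the complex structure on the $2$-sphere: every compact Riemann surface of genus $0$ is biholomorphic to the standard Riemann sphere. Since $\widetilde{S}$ is homeomorphic to $S^2$, it has genus $0$, and hence there exists a biholomorphism $\phi\colon S^2\to\widetilde{S}$. Setting $g:=g_1\circ h^{-1}\circ\phi$ gives a holomorphic map $g\colon S^2\to S^2$, as a composition of a holomorphic map with a biholomorphism. Its branching data coincides with that of $g_1$, and therefore of $f$, because composing on the source with the homeomorphism $h^{-1}\circ\phi$ merely permutes preimages of each branch point while preserving the multiset of local degrees there; consequently the set of branch points of $g$ in the target is exactly $\{p_1,\ldots,p_n\}$.

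I expect no serious technical obstacle in this argument. The only non-elementary input is the classical statement that the complex structure on $S^2$ is unique up to biholomorphism, a standard consequence of the uniformization theorem (or, more directly, of the existence of a degree-one meromorphic function on any compact Riemann surface of genus zero).
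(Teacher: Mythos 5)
Your argument is correct and is essentially the one the paper intends: the authors state the lemma as a direct consequence of the two preceding lemmas together with the uniqueness of the complex structure on $S^2$, which is exactly how you combine the branch-point relocation, Sto\"{\i}low's theorem, and a biholomorphism $\phi\colon S^2\to\widetilde{S}$. Your write-up is in fact more explicit than the paper, which omits the proof entirely.
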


\subsection{Proof of Proposition \ref{main-p1}}

\begin{proof}
Given the existence of a degree $sd'$ branched cover $f:S^{2}\rightarrow S^{2}$ with branching data $\mathcal{D}=\{A_1,A_2,A_3,\ldots,A_n\}$, the Riemann-Hurwitz formula yields:
\begin{equation}
\sum_{k=1}^{n}r_{k} = s(n-2)d' + 2.
\end{equation}

Considering the following inequalities on the ramification indices:
\begin{equation}
r_1 \leq d',\quad r_2 \leq d',\quad r_3 \leq \frac{sd'}{t},\quad r_k \leq sd'-1 \text{ for } k \geq 4,
\end{equation}
we derive the upper bound:
\begin{equation}
s(n-2)d' + 2 = \sum_{k=1}^{n}r_{k} \leq 2d' + \frac{sd'}{t} + (n-3)(sd'-1).
\end{equation}
This inequality simplifies to:
\begin{equation}
n-1 \leq \left(2 + \frac{s}{t} - s\right)d'.
\end{equation}

Define the parameter function:
\begin{equation}
\tau(s,t) = 2 + \frac{s}{t} - s \quad \text{for} \quad s \geq 2, t \geq 1.
\end{equation}

From the inequality $0 < n-1 \leq \tau(s,t)d'$, we conclude that $\tau(s,t) > 0$. However, when $s \geq 3$ and $t \geq 3$, we have $\tau(s,t) \leq \tau(3,3) = 0$, contradicting the positivity requirement. This leads to the following cases:
\begin{enumerate}
    \item For $s \geq 4$, necessarily $t = 1$ (since $t=2$ would give $\tau(s,2) = 2-s/2 \leq 0$ when $s \geq 4$).
    \item For $s = 3$, $t$ must be either $1$ or $2$.
    \item For $s = 2$, any $t \geq 1$ is admissible since $\tau(2,t) = 2/t > 0$.
\end{enumerate}

For the specific case when $s=3$ and $t=2$, the partitions take the form:
\begin{equation*}
A_1 = [3a_{11},\ldots,3a_{1r_{1}}], \quad A_2 = [3a_{21},\ldots,3a_{2r_{2}}], \quad A_3 = [2a_{31},\ldots,2a_{3r_{3}}].
\end{equation*}

Identifying $S^{2}$ with the Riemann sphere $\mathbb{C}\cup\{\infty\}$, Lemma \ref{main-le1} allows us to assume $f$ is a rational map with branching points at $0$ and $\infty$ corresponding to $A_1$ and $A_2$ respectively. The divisibility condition implies the existence of a degree $d'$ rational map $h$ with branching data:
\begin{itemize}
    \item Partitions $[a_{11},\ldots,a_{1r_{1}}]$ and $[a_{21},\ldots,a_{2r_{2}}]$
    \item Data obtained by splitting each $A_k$ ($k \geq 3$) into three partitions of $d'$
\end{itemize}

Let $A_3$ split into three partitions:
\begin{equation*}
A_{31} = [2a^{1}_{31},\ldots,2a^{1}_{3r_{1}}], \quad A_{32} = [2a^{2}_{31},\ldots,2a^{2}_{3r_{2}}], \quad A_{33} = [2a^{3}_{31},\ldots,2a^{3}_{3r_{3}}].
\end{equation*}

The common factor of 2 in $A_{31}$ and $A_{32}$ implies the existence of a degree $\frac{d'}{2}$ rational map $g$ with branching data from:
\begin{itemize}
    \item Partitions $[a^{1}_{31},\ldots,a^{1}_{3r_{1}}]$ and $[a^{2}_{31},\ldots,a^{2}_{3r_{2}}]$
    \item Splitting $A_k$ ($k \geq 4$) into six partitions of $\frac{d'}{2}$
    \item Splitting $A_{33}$ into two partitions of $\frac{d'}{2}$, denoted by $A_{331}$ and $A_{332}$
\end{itemize}

The remaining partitions $A_{331}$ and $A_{332}$ again share a common factor of 2, implying the existence of a degree $\frac{d'}{4}$ map. We conclude that $4$ divides $d'$.

A similar argument for $s=2$ and $t \geq 2$ shows that $t$ must divide $d'$.
\end{proof}

\subsection{Proof of Theorem \ref{main-th1}}

\begin{proof}
The necessity follows from Proposition \ref{main-p1}. For sufficiency, suppose there exists a degree $d'$ branched cover $g:S^{2}\rightarrow S^{2}$ with branching data:
\begin{itemize}
    \item Partitions $[a_{11},\ldots,a_{1r_{1}}]$ and $[a_{21},\ldots,a_{2r_{2}}]$
    \item Data obtained by splitting each $A_k$ ($k \geq 3$) into $s$ partitions of $d'$
\end{itemize}

By Lemma \ref{main-le1}, we may assume $g$ is a rational map with:
\begin{itemize}
    \item Branching points at $0$ and $\infty$ corresponding to the first two partitions
    \item Other branching points appropriately assigned
\end{itemize}
Then $g^s$ yields the desired map.
\end{proof}

\subsection{Proof of Theorem \ref{main-th2}}

\begin{proof}
The necessity is established in Proposition \ref{main-p1}. For sufficiency, assume there exists a degree $d'/t$ branched cover $g$ with branching data:
\begin{itemize}
    \item Two partitions obtained from $[a_{31},\ldots,a_{3r_{3}}]$, denoted by $[\widetilde{a}^{1}_{31},\ldots]$,  and $[\widetilde{a}^{2}_{31},\ldots]$
    \item Data from splitting $[a_{11},\ldots,a_{1r_{1}}]$ and $[a_{21},\ldots,a_{2r_{2}}]$ into $t$ partitions of $d'/t$
    \item Data from splitting each $A_k$ ($k \geq 4$) into $2t$ partitions of $d'/t$
\end{itemize}

By Lemma \ref{main-le1}, we may take $g$ to be a rational map of degree $d'/t$ with branch points at $0$ and $\infty$ corresponding to the two partitions $[\widetilde{a}^{1}_{31},\ldots]$,  and $[\widetilde{a}^{2}_{31},\ldots]$,  while the other branch points are assigned suitable complex numbers. Then,
$g^t$ produces a map  of degree $d'$ with partitions $[a_{11},\ldots,a_{1r_{1}}]$, $[a_{21},\ldots,a_{2r_{2}}]$, $[t\widetilde{a}^{1}_{31},\ldots]$, $[t\widetilde{a}^{2}_{31},\ldots]$, and splitting data from $A_{k}(k \geq 4)$.\par

By Lemma \ref{main-le1}, there exists a rational map $h$ with branching points at $\pm 1$ (for the partitions $[t\widetilde{a}^{1}_{31},\ldots]$ and $[t\widetilde{a}^{2}_{31},\ldots]$) and $0,\infty$ (for the the partitions $[a_{11},\ldots,a_{1r_{1}}]$ and  $[a_{21},\ldots,a_{2r_{2}}]$), while the other branch points are assigned suitable complex numbers. Then $h^2$ yields the desired map.

\end{proof}

\subsection{Proof of Theorem \ref{main-th3}}

\begin{proof}
The necessity follows from Proposition \ref{main-p1}. The sufficiency is similar as the proof of Theorem \ref{main-th2}.
\end{proof}

\section{Some Examples}
As an application of our main results, we present several new families of exceptional branching data for maps with at least three branch points. These examples demonstrate the power of Corollary~\ref{Co-1} in identifying exceptional cases. In fact, one can give more exceptional branching data using our results in this paper.

\begin{example}
Let $d=sk$ be a positive integer with $s\geq2$ and $k\geq2$, and let $[a_{1},\ldots,a_{r_{1}}]$ and $[a_{r_{1}+1},\ldots,a_{2k(s-1)+2}]$ be two nontrivial partitions of $d$. If $a_{i}\geq k+1$ for some $i$, then the candidate branching data of
$$(d,\{[a_{1},\ldots,a_{r_{1}}],[a_{r_{1}+1},\ldots,a_{2k(s-1)+2}],
[\underbrace{s,\ldots,s}_k],[\underbrace{s,\ldots,s}_k]\})$$
 is exceptional.
\begin{proof}
Since $a_{i}\geq k+1>k$, according to \textbf{Corollary \ref{Co-1}}, the data is  exceptional.
\end{proof}
\end{example}

\begin{example}
Let $d=sk$ be a positive integer with $s\geq2$ and $k\geq2$,  and let $[a_{1},\ldots,a_{r_{1}}]$, $[a_{r_{1}+1},\ldots,a_{r_{2}}]$ and $[a_{r_{2}+1},\ldots,a_{(3s-2)k+2}]$ be three nontrivial partitions of $d$. If $a_{i}\geq k+1$ for some $i$, then the candidate branching data of
$$(d,\{[a_{1},\ldots,a_{r_{1}}],[a_{r_{1}+1},\ldots,a_{r_{2}}],
[a_{r_{2}+1},\ldots,a_{(3s-2)k+2}],[\underbrace{s,\ldots,s}_k],
[\underbrace{s,\ldots,s}_k]\})$$
is exceptional.
\begin{proof}
Since $a_{i}\geq k+1>k$, according to \textbf{Corollary \ref{Co-1}}, the data is  exceptional.
\end{proof}
\end{example}

\begin{example}
Let $d=sk$ be a positive integer with $s\geq2$ and $k\geq2$, and let $[a_{1},\ldots,a_{r_{1}}]$, $[a_{r_{1}+1},\ldots,a_{r_{2}}]$, $[a_{r_{2}+1},\ldots,a_{r_{3}}]$,  and $[a_{r_{3}+1},\ldots,a_{(4s-2)k+2}]$ be four nontrivial partitions of $d$. If $a_{i}\geq k+1$ for some $i$, then the candidate branching data of
$$(d,\{[a_{1},\ldots,a_{r_{1}}],\ldots,
[a_{r_{3}+1},\ldots,a_{(4s-2)k+2}],[\underbrace{s,\ldots,s}_k],[\underbrace{s,\ldots,s}_k]\})$$
is exceptional.
\begin{proof}
Since $a_{i}\geq k+1>k$, according to \textbf{Corollary \ref{Co-1}}, the data is  exceptional.
\end{proof}
\end{example}

\begin{example}
Let $d=sk$ be a positive integer with $s\geq2$ and $k\geq2$, and let $[a_{1},\ldots,a_{r_{1}}]$, $[a_{r_{1}+1},\ldots,a_{r_{2}}]$, \ldots, $[a_{r_{3}+1},\ldots,a_{r_{4}}]$ and $[a_{r_{4}+1},\ldots,a_{(5s-2)k+2}]$ be five nontrivial partitions of $d$. If $a_{i}\geq k+1$ for some $i$, then the candidate branching data of
$$(d,\{[a_{1},\ldots,a_{r_{1}}],\ldots,
[a_{r_{4}+1},\ldots,a_{(5s-2)k+2}],[\underbrace{s,\ldots,s}_k],[\underbrace{s,\ldots,s}_k]\})$$
is exceptional.
\begin{proof}
Since $a_{i}\geq k+1>k$, according to \textbf{Corollary \ref{Co-1}}, the data is  exceptional.
\end{proof}
\end{example}

\begin{example}
Let $d=sk$ be a positive integer with $s\geq2$ and $k\geq2$, and let $[a_{1},\ldots,a_{r_{1}}]$, $[a_{r_{1}+1},\ldots,a_{r_{2}}]$, \ldots, $[a_{r_{4}+1},\ldots,a_{r_{5}}]$ and $[a_{r_{5}+1},\ldots,a_{(6s-2)k+2}]$ be six nontrivial partitions of $d$. If $a_{i}\geq k+1$ for some $i$, then the candidate branching data of
$$(d,\{[a_{1},\ldots,a_{r_{1}}],\ldots,
[a_{r_{5}+1},\ldots,a_{(6s-2)k+2}],[\underbrace{s,\ldots,s}_k],[\underbrace{s,\ldots,s}_k]\})$$
is exceptional.
\begin{proof}
Since $a_{i}\geq k+1>k$, according to \textbf{Corollary \ref{Co-1}}, the data is  exceptional.
\end{proof}
\end{example}

In general, we have the following example.
\begin{example}
Let $d=sk$ be a positive integer with $s\geq2$ and $k\geq2$, and let $[a_{1},\ldots,a_{r_{1}}]$, \ldots, $[a_{r_{t-2}+1},\ldots,a_{r_{t-1}}]$ and $[a_{r_{t-1}+1},\ldots,a_{(ts-2)k+2}]$ be $t$ nontrivial partitions of $d$. If $a_{i}\geq k+1$ for some $i$, then the candidate branching data of
$$(d,\{[a_{1},\ldots,a_{r_{1}}],\ldots,
[a_{r_{t-1}+1},\ldots,a_{(ts-2)k+2}],[\underbrace{s,\ldots,s}_k],[\underbrace{s,\ldots,s}_k]\})$$
is exceptional.
\begin{proof}
Since $a_{i}\geq k+1>k$, according to \textbf{Corollary \ref{Co-1}}, the data is  exceptional.
\end{proof}
\end{example}


\textbf{Declarations}

\textbf{Data Availability Statement}  This manuscript has no associated data.

\textbf{Competing interests} On behalf of all authors, the corresponding author declares that there is no conflict of interest.


\noindent
Yingjie Meng\\
School of Mathematical Sciences, Henan University,  Kaifeng 475004 P.R. China\\
Email: mengyingjie@henu.edu.cn\\
Zhiqiang Wei\\
School of Mathematics and Statistics, Henan University, Kaifeng 475004 P.R. China\\
Center for Applied Mathematics of Henan Province, Henan University, Zhengzhou 450046 P.R. China\\
Email: weizhiqiang15@mails.ucas.edu.cn ~or~10100123@vip.henu.edu.cn\\
Chuankai Zhou\\
School of Mathematical Sciences, Henan University,  Kaifeng 475004 P.R. China\\
Email: zhouchuankai@henu.edu.cn\\

\begin{thebibliography}{99}

\bibitem{Bar01}
Bara\'{n}ski, K.
On realizability of branched coverings of the sphere.
\textit{Topology Appl.} \textbf{116} (2001), no.~3, 279--291.

\bibitem{Bo82}
Boccara, G.
Cycles comme produit de deux permutations de classes donn\'{e}es.
\textit{Discrete Math.} \textbf{58} (1982), 129--142.

\bibitem{CH22}
Cui, W.~W. and Hu, J.
Totally ramified rational maps.
\textit{Conform. Geom. Dyn.} \textbf{26} (2022), 208--234.

\bibitem{SK11}
Donaldson, S.~K.
\textit{Riemann Surfaces}.
Oxford Graduate Texts in Mathematics, Oxford University Press, 2011.

\bibitem{EKS84}
Edmonds, A.~L., Kulkarni, R.~S., and Stong, R.~E.
Realizability of branched coverings of surfaces.
\textit{Trans. Amer. Math. Soc.} \textbf{282} (1984), no.~2, 773--790.

\bibitem{Hur91}
Hurwitz, A.
Ueber Riemann'sche Fl\"{a}chen mit gegebenen Verzweigungspunkten.
\textit{Math. Ann.} \textbf{39} (1891), no.~1, 1--60.

\bibitem{JY04}
Jiang, Y.~F.
Realizability of some classes of abstract branch data over Riemann sphere.
\textit{J. Univ. Chin. Acad. Sci.} \textbf{21} (2004), no.~3, 299--304.

\bibitem{RP19}
Luisto, R. and Pankka, P.
Sto\"{i}low's theorem revisited.
\textit{Expo. Math.} (2019).
https://doi.org/10.1016/j.exmath.2019.04.002

\bibitem{AD84}
Mednykh, A.~D.
Nonequivalent coverings of Riemann surfaces with a prescribed ramification type.
\textit{Sibirsk. Mat. Zh.} \textbf{25} (1984), no.~4, 120--142 (Russian).

\bibitem{Pak09}
Pakovich, F.
Solution of the Hurwitz problem for Laurent polynomials.
\textit{J. Knot Theory Ramifications} \textbf{18} (2009), no.~2, 271--302.

\bibitem{FP24}
Pakovich, F.
Hurwitz existence problem and fiber products.
\textit{arXiv:2408.10874v1} [math.CV] (2024).

\bibitem{PP09}
Pascali, M.~A. and Petronio, C.
Surface branched covers and geometric 2-orbifolds.
\textit{Trans. Amer. Math. Soc.} \textbf{361} (2009), no.~11, 5885--5920.

\bibitem{PP12}
Pascali, M.~A. and Petronio, C.
Branched covers of the sphere and the prime-degree conjecture.
\textit{Ann. Mat. Pura Appl.} \textbf{191} (2012), no.~3, 563--594.

\bibitem{PP06}
Pervova, E. and Petronio, C.
On the existence of branched coverings between surfaces with prescribed branch data, I.
\textit{Algebr. Geom. Topol.} \textbf{6} (2006), 1957--1985.

\bibitem{PP08}
Pervova, E. and Petronio, C.
On the existence of branched coverings between surfaces with prescribed branch data, II.
\textit{J. Knot Theory Ramifications} \textbf{17} (2008), no.~7, 787--816.

\bibitem{P20}
Petronio, C.
The Hurwitz existence problem for surface branched covers.
\textit{Winter Braids Lect. Notes} \textbf{7} (2020), Winter Braids X (Pisa, 2020), Exp. No.~2, 43~pp.

\bibitem{SX20}
Song, J.~J. and Xu, B.
On rational functions with more than three branch points.
\textit{Algebra Colloq.} \textbf{27} (2020), no.~2, 231--246.


\bibitem{St38}
Sto\"{i}low, S.
\textit{Le\c{c}ons sur les principes topologiques de la th\'{e}orie des fonctions analytiques}.
Gauthier-Villars, Paris, 1938.

\bibitem{Th65}
Thom, R.
L'\'{e}quivalence d'une fonction diff\'{e}rentiable et d'un polyn\^{o}me.
\textit{Topology} \textbf{3} (1965), suppl.~2, 297--307.

\bibitem{WWX24}
Wei, Z.~Q., Wu, Y.~Y., and Xu, B.
A note on rational maps with three branching points on the Riemann sphere.
To appear in \textit{Algebr. Geom. Topol.}

\bibitem{Wh42}
Whyburn, G.~T.
\textit{Analytic Topology}.
American Mathematical Society Colloquium Publications, vol.~28, AMS, Providence, RI, 1942.

\bibitem{Wh64}
Whyburn, G.~T.
\textit{Analytic Topology}.
Second, revised ed., Princeton Mathematical Series, vol.~23, Princeton University Press, Princeton, NJ, 1964.

\bibitem{Zhe06}
Zheng, H.
Realizability of branched covering of $S^{2}$.
\textit{Topology Appl.} \textbf{153} (2006), no.~12, 2124--2134.

\bibitem{Zhu19}
Zhu, X.~W.
Spherical conic metrics and realizability of branched covers.
\textit{Proc. Amer. Math. Soc.} \textbf{147} (2019), no.~4, 1805--1815.

\end{thebibliography}
\end{document}